\newtheorem{theorem}{Theorem}[section]
\newtheorem{lemma}[theorem]{Lemma}
\newtheorem{proposition}[theorem]{Proposition}
\newtheorem{claim}[theorem]{Claim}
\newtheorem{question}[theorem]{Question}
\newtheorem*{THMA}{Theorem A}
\newtheorem*{THMB}{Theorem B}
\theoremstyle{definition}
\newtheorem{definition}[theorem]{Definition}
\newtheorem{remark}[theorem]{Remark}
\newcommand{\cf}{\mathrm{cf}}
\newcommand{\dom}{\mathrm{dom}}
\newcommand{\bb}{\mathbb}
\newcommand{\otp}{\mathrm{otp}}
\newcommand{\tp}{\mathrm{tp}}
\newcommand{\ssup}{\mathrm{ssup}}
\title{On the growth rate of chromatic numbers of finite subgraphs}
\author{Chris Lambie-Hanson}
\thanks{We thank P\'{e}ter K\'{o}mjath for providing us with some information
about the history of the question answered in this paper.}
\address{Department of Mathematics and Applied Mathematics \\
Virginia Commonwealth University \\
Richmond, VA 23284 \\ United States}
\email{cblambiehanso@vcu.edu}
\subjclass[2010]{Primary 05C63. Secondary 05C15, 03E05.}
\keywords{Uncountable graphs, chromatic number, club guessing}
\begin{document}
\begin{abstract}
	We prove that, for every function $f:\bb{N} \rightarrow \bb{N}$, there is a
	graph $G$ with uncountable chromatic number such that, for every $k \in \bb{N}$
	with $k \geq 3$, every subgraph of $G$ with fewer than $f(k)$ vertices has chromatic number
	less than $k$. This answers a question of Erd\H{o}s, Hajnal, and Szemeredi.
\end{abstract}
\date{\today}
\maketitle

\section{Introduction}

The De Bruijn-Erd\H{o}s compactness theorem, proven in \cite{de_bruijn_erdos},
states that, for every natural number $d$ and every graph $G$, if every finite subgraph of $G$ has chromatic
number at most $d$, then $G$ also has chromatic number at most $d$. It follows
that, for every graph $G$ with infinite chromatic number, we can define a function
$f_G : \bb{N} \rightarrow \bb{N}$ by letting $f_G(k)$ be the least natural number
$m$ for which there exists a subgraph of $G$ with $m$ vertices and chromatic number at least $k$.
This function is clearly increasing, and the question naturally arises:
how quickly can $f_G$ grow?

By a result of Erd\H{o}s \cite{erdos_circuits_and_subgraphs},
for any function $f:\bb{N} \rightarrow \bb{N}$,
there is a graph $G$ with chromatic number $\aleph_0$ such that $f_G$ grows
faster than $f$. For this reason, investigation of this question has been
focused on graphs with uncountable chromatic number. Indeed, there are relevant
ways in which graphs with uncountable chromatic number behave fundamentally differently
from graphs with finite or countable chromatic number. For example, for every
natural number $k$, there are graphs of girth $k$ and arbitrarily large finite
chromatic number. By taking disjoint unions of such graphs, it follows that there
are graphs of girth $k$ and chromatic number $\aleph_0$. On the other hand,
Erd\H{o}s and Hajnal prove in \cite{erdos_hajnal_chromatic_number} that every
uncountably chromatic graph must contain a copy of every finite bipartite graph;
in particular, it contains a cycle of every even length.

In \cite{erdos_hajnal_chromatic_number}, Erd\H{o}s and Hajnal introduce the
shift graphs $G_0(\alpha, n, s)$ for ordinals $\alpha$, and natural numbers
$n$ and $s$ with $1 \leq s \leq n-1$. The following facts about shift graphs are proven in
\cite{erdos_hajnal_chromatic_number} and \cite{erdos_hajnal_szemeredi}.
In what follows, the expression $\log^{(n)}$ denotes the $n$-times iterated base 2 logarithm
and $\exp_n$ denotes the $n$-times iterated exponential function.

\begin{theorem}
	Suppose that $\alpha$ is an ordinal and $n$ and $s$ are natural numbers with
	$1 \leq s \leq n-1$.
	\begin{enumerate}
		\item If $\kappa$ is an infinite cardinal and $\alpha \geq (\exp_{n-1}(\kappa))^+$,
		then $\chi(G_0(\alpha, n, s)) > \kappa$.
		\item There is a constant $c_n > 0$ such that, for every natural number $k$
		and every subgraph $H$ of $G_0(\alpha, n, 1)$ with $k$ vertices, we have
		$\chi(H) \leq c_n \log^{(n-1)}(k)$.
	\end{enumerate}
\end{theorem}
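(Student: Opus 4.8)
The plan is to treat the two parts separately: part (1) is a direct application of the Erd\H{o}s--Rado partition theorem, while part (2) is an induction on $n$. Throughout, I picture a vertex of $G_0(\alpha, n, s)$ as a strictly increasing $n$-tuple from $\alpha$, with $(a_1, \dots, a_n)$ and $(b_1, \dots, b_n)$ adjacent exactly when there is an increasing tuple $\xi_1 < \cdots < \xi_{n+s}$ from $\alpha$ satisfying $(a_1, \dots, a_n) = (\xi_1, \dots, \xi_n)$ and $(b_1, \dots, b_n) = (\xi_{s+1}, \dots, \xi_{n+s})$; when $s = 1$, $(b_1, \dots, b_n)$ is simply the shift of $(a_1, \dots, a_n)$.

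For part (1), let $g \colon [\alpha]^n \to \kappa$ be an arbitrary coloring of the vertex set of $G_0(\alpha, n, s)$. Since $\alpha \geq (\exp_{n-1}(\kappa))^+$ and $\exp_{n-1}(\kappa)$ equals $\beth_{n-1}(\kappa)$, the Erd\H{o}s--Rado theorem in the form $(\beth_{n-1}(\kappa))^+ \to (\kappa^+)^n_\kappa$ produces a set $H \subseteq \alpha$ with $|H| = \kappa^+$ on which $g$ is constant on $[H]^n$. As $\kappa^+ > n + s$, choose $\xi_1 < \cdots < \xi_{n+s}$ in $H$; then $(\xi_1, \dots, \xi_n)$ and $(\xi_{s+1}, \dots, \xi_{n+s})$ are adjacent vertices receiving the same $g$-color, so $g$ is not a proper coloring. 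Hence $\chi(G_0(\alpha, n, s)) > \kappa$, and the only things to check are that $H$ contains $n+s$ elements and that the two displayed tuples really form an edge, both of which are immediate from the description of the edge relation.

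For part (2), I would induct on $n \geq 2$ (the requirement $1 \leq s \leq n-1$ forces $n \geq 2$). Fix a subgraph $H$ of $G_0(\alpha, n, 1)$ with $k$ vertices; only $r \leq nk$ ordinals occur among its vertices, and after an order isomorphism I may assume these are $0, 1, \dots, r-1$, written in binary with $\lceil \log_2 r \rceil$ bits. For the base case $n = 2$, color a pair $(a, b)$ with $a < b$ by the position of the most significant bit in which $a$ and $b$ differ; were $(a, b)$ and $(b, c)$ assigned the same position $p$, then at bit $p$ the smaller member of each pair holds $0$ and the larger holds $1$, forcing $b$ to hold both values --- impossible --- so this is a proper coloring with at most $\lceil \log_2 (2k) \rceil$ colors, which is at most $c_2 \log^{(1)}(k)$ for a suitable $c_2$. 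For the inductive step, let $H \subseteq G_0(\alpha, n+1, 1)$ have $k$ vertices and let $\phi$, with color set $C$, be a proper coloring of the induced subgraph of $G_0(\alpha, n, 1)$ on the at most $2k$ many $n$-tuples that occur as the first $n$ or as the last $n$ coordinates of a vertex of $H$; the inductive hypothesis gives $|C| \leq c_n \log^{(n-1)}(2k)$. Map a vertex $(a_1, \dots, a_{n+1})$ of $H$ to $\psi(a_1, \dots, a_{n+1}) = \bigl( \phi(a_1, \dots, a_n),\, \phi(a_2, \dots, a_{n+1}) \bigr) \in C \times C$; since $(a_1, \dots, a_n)$ and $(a_2, \dots, a_{n+1})$ are adjacent in $G_0(\alpha, n, 1)$, the two entries are distinct. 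Now linearly order $C$ and color an ordered pair $(x, y)$ of distinct elements of $C$ by $\delta(x, y) = (p, \varepsilon)$, where $p$ is the most significant bit in which $x$ and $y$ differ and $\varepsilon$ is the value of that bit in $x$; exactly as in the base case, $\delta(x, y) \neq \delta(y, z)$ whenever $x \neq y$ and $y \neq z$, since the middle term $y$ cannot hold two values at bit $p$. Along any edge of $H$ the map $\psi$ takes values of the form $(x, y)$ and $(y, z)$ with $x \neq y$ and $y \neq z$, so $\delta \circ \psi$ is a proper coloring of $H$ using at most $2 \lceil \log_2 |C| \rceil \leq 2 \lceil \log_2 ( c_n \log^{(n-1)}(2k) ) \rceil$ colors, which is at most $c_{n+1} \log^{(n)}(k)$ once $c_{n+1}$ is chosen to absorb the additive constants (the finitely many small values of $k$ being handled separately).

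I expect the inductive step of part (2) to be the real obstacle. The obvious attempt --- remembering the ordered pair $(\phi(a_1, \dots, a_n), \phi(a_2, \dots, a_{n+1}))$ --- does yield a proper coloring, but it roughly squares the number of colors at each stage, so it only gives a bound polynomial in $\log^{(n-1)}(k)$ rather than of order $\log^{(n-1)}(k)$. The genuine saving comes from post-composing with the logarithmic ``shift coloring'' $\delta$ of ordered pairs drawn from the color set, and the work lies in verifying that this composite is still proper and that the iterated-logarithm bound propagates with a controlled constant.
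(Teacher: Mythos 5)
This theorem is not proved in the paper at all: it is quoted as background, with the proofs attributed to the cited papers of Erd\H{o}s--Hajnal and Erd\H{o}s--Hajnal--Szemer\'{e}di, so there is no in-paper argument to compare against. Your proposal is, in essence, the classical argument from those sources and it is correct. For part (1), applying Erd\H{o}s--Rado in the form $(\beth_{n-1}(\kappa))^+ \rightarrow (\kappa^+)^n_\kappa$ and then reading off an edge from $n+s$ points of the homogeneous set is exactly the standard lower-bound proof. For part (2), your two-stage step --- first record the ordered pair $\bigl(\phi(a_1,\dots,a_n), \phi(a_2,\dots,a_{n+1})\bigr)$, then compress it with the ``most significant differing bit plus its value in the first coordinate'' coloring $\delta$ of ordered pairs of distinct colors --- is the standard device that keeps the growth at one extra iterated logarithm per step rather than squaring, and your verification that $\delta(x,y)\neq\delta(y,z)$ when $x\neq y$ and $y\neq z$ is the right one; note that in the base case the order $a<b$ supplies the orientation, which is why no $\varepsilon$-bit is needed there. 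The only caveat is the bookkeeping for very small $k$: when $\log^{(n-1)}(k)\leq 0$ no constant $c_n$ can work since $\chi(H)\geq 1$, so the statement must be read with the usual convention (sufficiently large $k$, or logarithms truncated below at $1$); with that reading, your absorption of the additive constants into $c_{n+1}$ goes through, and also make sure you invoke the inductive hypothesis for the induced subgraph on the at most $2k$ initial/final $n$-segments, as you do, rather than for all of $G_0(\alpha,n,1)$.
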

As a result, it follows that, for every $n < \omega$, there is an uncountably
chromatic graph $G$ such that $f_G$ grows more quickly than $\exp_n$.
This led Erd\H{o}s, Hajnal, and Szemeredi to formulate the following general question,
asking if $f_G$ can grow arbitrarily quickly for graphs $G$ with uncountable
chromatic number.
(The question is mentioned in many places in slightly different forms; see, e.g.,
\cite{erdos_hajnal_szemeredi} and \cite{erdos_unsolved_problems}).

\begin{question} \label{main_question}
	Is it true that, for every function $f:\bb{N} \rightarrow \bb{N}$, there is
	an uncountably chromatic graph $G$ such that $\lim_{k \rightarrow \infty} f(k)/f_G(k) = 0$?
\end{question}

In \cite{komjath_shelah_finite_subgraphs}, Komj\'{a}th and Shelah prove that Question~\ref{main_question}
consistently has a positive answer. In particular, given a model of $\mathsf{ZFC}$,
they construct a forcing extension of that model in which, for every function $f:\bb{N}
\rightarrow \bb{N}$, there is a graph $G$ such that $|G| = \chi(G) = \aleph_1$ and,
for every natural number $k \geq 3$, $f_G(k) \geq f(k)$.

In this paper, we prove outright that Question~\ref{main_question} has a positive
answer in $\mathsf{ZFC}$.

\begin{THMA}
	For every function $f:\bb{N} \rightarrow \bb{N}$, there is a graph
	$G$ such that $|G| = 2^{\aleph_1}$, $\chi(G) = \aleph_1$ and, for every natural
	number $k \geq 3$, $f_G(k) \geq f(k)$.
\end{THMA}

Notice that the cardinality of the graph given by the theorem is strictly greater than
$\aleph_1$. It is unclear whether this is necessary in general. However, we also
prove that, under the additional assumption of $\diamondsuit$, we can obtain
graphs of size $\aleph_1$ and can even require them to be particular types of
graphs known as \emph{Hajnal-M\'{a}t\'{e} graphs}.

\begin{THMB}
	Suppose that $\diamondsuit$ holds. Then, for every function
	$f:\bb{N} \rightarrow \bb{N}$, there is a Hajnal-M\'{a}t\'{e} graph
	$G$ such that $|G| = \chi(G) = \aleph_1$ and, for every natural
	number $k \geq 3$, $f_G(k) \geq f(k)$.
\end{THMB}

The structure of the paper is as follows. In Section~\ref{graph_section}, we
introduce some of the basic graph-theoretic notions we will be using and prove
some basic propositions. In Section~\ref{club_guessing_section}, we review
the set-theoretic technology of club guessing and relate it to some of the
notions introduced in Section~\ref{graph_section}. In Section~\ref{main_proof_section},
we prove Theorem A. In Section \ref{hajnal_mate_section}, we prove Theorem B.
We conclude by noting some questions that remain open.

\subsection{Notation and conventions}

The notation and definitions used here are mostly standard.
We refer the reader to \cite{jech} for any undefined set-theoretic notions.
We include $0$ in the set $\bb{N}$ of natural numbers.
$\mathrm{Ord}$ denotes the class of ordinals. Any ordinal is thought of as a set
whose elements are all strictly smaller ordinals. If $n$ is a natural
number, then $[\mathrm{Ord}]^n$ denotes the class of $n$-element sets of
ordinals. Members of $[\mathrm{Ord}]^n$ will typically be presented
as $\{\alpha_0, \alpha_1, \ldots, \alpha_{n-1}\}$, where $\alpha_0 < \alpha_1 <
\ldots < \alpha_{n-1}$.

If $A$ is a set of ordinals, then $\otp(A)$ denotes the order type of $A$.
If $i < \otp(A)$, then $A(i)$ denotes the unique
element $\alpha$ of $A$ such that $\otp(A \cap \alpha) = i$. If
$I \subseteq \otp(A)$, then $A[I]$ denotes $\{A(i) \mid i \in I\}$.
The \emph{strong supremum of $A$}, denoted $\ssup(A)$, is defined
to be $\sup\{\alpha + 1 \mid \alpha \in A\}$. It is the least ordinal
that is strictly greater than every ordinal in $A$.
If $A$ and $B$ are two sets of ordinals, then we say that
$B$ \emph{end-extends} $A$, written $A \sqsubseteq B$, if
$B \cap \ssup(A) = A$. If $\sigma$ and $\tau$ are functions whose
domains are sets of ordinals, then we say that $\tau$ \emph{end-extends}
$\sigma$, written $\sigma \sqsubseteq \tau$, if $\dom(\sigma)
\sqsubseteq \dom(\tau)$ and $\tau \restriction \dom(\sigma) = \sigma$.

The \emph{cofinality} of an ordinal $\alpha$ is denoted by $\cf(\alpha)$.
If $\beta$ is an ordinal and $\mu$ is an infinite regular cardinal, then
$S^\beta_\mu = \{\alpha < \beta \mid \cf(\alpha) = \mu\}$.
If $\theta$ is an infinite cardinal, then $H(\theta)$ denotes the collection of
sets hereditarily of cardinality less than $\theta$.

All of our graphs are simple undirected graphs (in particular, they have no loops).
If $G = (V, E)$ is a graph and $\vec{v} = \langle v_0, \ldots, v_n \rangle$ is a finite sequence
of elements of $V$, then $\vec{v}$ is a \emph{walk} in $G$ if $\{v_i, v_{i+1}\} \in E$
for every $i < n$. $\vec{v}$ is a \emph{closed walk} if $\vec{v}$ is a walk and, moreover,
$v_0 = v_n$. Finally, $\vec{v}$ is a \emph{cycle} if it is a closed walk,
$n \geq 3$, and $v_i \neq v_j$ for all $i < j < n$. In such a case, $n$ is the
\emph{length} of the cycle. The cycle is an \emph{odd cycle} if $n$ is odd.

If $G = (V, E)$ is a graph, then we will sometimes write $|G|$ to mean $|V|$.
The chromatic number of $G$ is denoted by $\chi(G)$.

\section{Types and Specker graphs} \label{graph_section}

In this section, we introduce some of the basic notions that we will be using.

\begin{definition}
	Suppose that $n$ is a natural number.
	\begin{enumerate}
		\item A \emph{disjoint type of length $n$} is a function $t : 2n \rightarrow 2$
		such that
		\[
		|\{i < 2n \mid t(i) = 0\}| = |\{i < 2n \mid t(i) = 1\}| = n.
		\]
		\item If $a$ and $b$ are disjoint elements of $[\mathrm{Ord}]^n$, then
		$\tp(a,b)$ is the unique disjoint type $t :2n \rightarrow 2$ such that,
		letting $a \cup b = \{\alpha_0, \alpha_1, \ldots, \alpha_{2n-1}\}$,
		enumerated in increasing order, we have $a = \{\alpha_i \mid i < 2n \text{ and } t(n) = 0\}$
		and $b = \{\alpha_i \mid i < 2n \text{ and } t(n) = 1\}$.
	\end{enumerate}
\end{definition}

A disjoint type of length $n$ will often be represented as a sequence of $0$s and
$1$s of length $2n$. For example, if $a = \{0, 1, 3\}$ and $b = \{2, 4, 5\}$,
then $\tp(a, b) = 001011$. If $t_0$ and $t_1$ are two disjoint types of lengths
$n_0$ and $n_1$, respectively, then $t_0 {^\frown} t_1$ denotes the disjoint type
of length $n_0 + n_1$ represented by the concatenation of the sequences of $0$s
and $1$s representing $t_0$ and $t_1$. Formally, $t_0 {^\frown} t_1$ is the function
$t : 2n_0 + 2n_1 \rightarrow 2$ defined by letting
\[
	t(i) =
	\begin{cases}
		t_0(i) & \text{if } i < 2n_0 \\
		t_1(i - 2n_0) & \text{if } 2n_0 \leq i < 2n_0 + 2n_1.
	\end{cases}
\]
We will be particularly interested in the following
family of types.

\begin{definition}
	Suppose that $s$ and $n$ are natural numbers with $1 \leq s \leq n-1$. Then
	$t^n_s$ is the disjoint type of length $n$ defined by letting, for all
	$i < 2n$,
	\[
		t^n_s(i) =
		\begin{cases}
			0 & \text{if } i < s \\
			0 & \text{if } s \leq i < 2n - s \text{ and } i - s \text{ is even} \\
			1 & \text{if } s \leq i < 2n - s \text{ and } i - s \text{ is odd} \\
			1 & \text{if } i \geq 2n - s.
		\end{cases}
	\]
\end{definition}

This definition might initially be difficult to parse. Essentially, $t^n_s$ is
the type consisting of $s$ copies of $0$, followed by $n-s$ copies of $01$, followed
by $s$ copies of $1$. For example, $t^5_2 = 0001010111$.

\begin{definition}
	Suppose that $n$ is a natural number, $t$ is a disjoint type of length $n$, and
	$\alpha$ is an ordinal. Then $G(\alpha, t) = ([\alpha]^n, E(\alpha, t))$ is the
	graph with vertex set $[\alpha]^n$ and edge set $E(\alpha, t)$ defined by setting
	$\{a,b\} \in E(\alpha, t)$ if and only if $a$ and $b$ are disjoint elements of
	$[\alpha]^n$ and either $\tp(a, b) = t$ or $\tp(b, a) = t$.
\end{definition}

Graphs of the form $G(\alpha, t^n_s)$, where $s$ and $n$ are natural numbers
with $1 \leq s \leq n-1$, are sometimes known as \emph{Specker graphs} (see
\cite{erdos_hajnal_szemeredi}). Erd\H{o}s and Hajnal proved the following
facts about Specker graphs.

\begin{theorem}[{\cite[Theorem 7.4]{erdos_hajnal_chromatic_number}}] \label{specker_graph_theorem}
	Suppose that $s$ and $n$ are natural numbers with $1 \leq s \leq n-1$, and
	suppose that $\alpha$ is an ordinal.
	\begin{enumerate}
		\item If $\alpha$ is an infinite cardinal, then $\chi(G(\alpha, t^n_s)) =
		|G(\alpha, t^n_s)| = \alpha$.
		\item If $n \geq 2s^2 + 1$, then $G(\alpha, t^n_s)$ contains no odd
		cycles of length $2s + 1$ or shorter.
	\end{enumerate}
\end{theorem}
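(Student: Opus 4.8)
I would treat the two parts separately.

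\emph{Part (1).} For an infinite cardinal $\kappa$ the equality $|[\kappa]^n|=\kappa$ is routine cardinal arithmetic, and since $G(\kappa,t^n_s)$ has exactly $|[\kappa]^n|$ vertices, the bound $\chi(G(\kappa,t^n_s))\le\kappa$ is immediate. The content is the lower bound $\chi(G(\kappa,t^n_s))\ge\kappa$. I would first note that whenever $\lambda\le\kappa$ the ordinal $\lambda$ is an initial segment of $\kappa$, so $G(\lambda,t^n_s)$ is an induced subgraph of $G(\kappa,t^n_s)$; granting the result for regular cardinals this also settles singular $\kappa$, so I may assume $\kappa$ regular and induct on $n$. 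The type decomposes as $t^n_s=0\,{}^\frown t^{n-1}_{s-1}\,{}^\frown 1$ (with the convention $t^{n-1}_0:=\langle 0,1,0,1,\dots\rangle$ when $s=1$), and I would show, given any $c:[\kappa]^n\to\mu$ with $\mu<\kappa$, that fixing the least ordinal as the initial coordinate and then pressing down (in the style of Fodor's lemma) over the choice of the last coordinate $\delta<\kappa$ together with the $\mu$ colors arranges for a monochromatic $t^n_s$-edge to be produced from a monochromatic $t^{n-1}_{s-1}$-edge of an induced coloring on a long interval, which exists by the inductive hypothesis. The induction is anchored by the complete graph: when the type is $\langle 0,1\rangle$ — or, concretely, the length-$2n$ type $\langle 0,\dots,0,1,\dots,1\rangle$, by partitioning $\kappa$ into consecutive blocks of size $n$ — one gets a complete subgraph on $\kappa$ vertices.

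\emph{Part (2).} Since $t^n_s$ differs from its $0\leftrightarrow 1$ swap (it begins with $0$), every edge $\{a,b\}$ of $G(\alpha,t^n_s)$ satisfies exactly one of $\tp(a,b)=t^n_s$, $\tp(b,a)=t^n_s$; call the edge \emph{forward} from $a$ to $b$ in the first case. From the shape of $t^n_s$ ($s$ zeros, then $n-s$ copies of $01$, then $s$ ones), a direct computation of ranks gives the following: if $\tp(a,b)=t^n_s$, then $a(i)<b(i)$ for every $i<n$, and in fact $a(i+s)<b(i)<a(i+s+1)$ whenever $i$, $i+s$, and $i+s+1$ all lie in $\{0,\dots,n-1\}$, with one-sided variants near the two ends. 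In particular the forward relation refines the coordinatewise order, so $G(\alpha,t^n_s)$ has no directed cycles at all.

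Now suppose toward a contradiction that $\langle v_0,\dots,v_\ell\rangle$ is a cycle with $v_0=v_\ell$, $\ell$ odd, and $\ell\le 2s+1$. I would chain the inequalities above around the cycle in two complementary ways. Following the quantity $d_k:=|\{j<n:v_k(j)<\mu\}|$ for a suitably central ordinal $\mu$, one finds that $d$ decreases by exactly $s$ along each forward edge and increases by exactly $s+1$ along each backward edge; since $d_\ell=d_0$, this yields $sf=(s+1)b$, where $f$ and $b$ are the numbers of forward and backward edges. Separately, following a single coordinate $v_k(i)$ of a central index $i$ around the cycle and composing the two-sided inequalities shows that $v_0(i)$ lies strictly between $v_0(i+\Sigma)$ and $v_0(i+\Sigma+\ell)$, where $\Sigma=sf-(s+1)b$ is the net index shift of the loop; but the first computation makes $\Sigma=0$, so $v_0(i)<v_0(i)$ — absurd.

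The step I expect to be the real obstacle is the bookkeeping that legitimizes both chains simultaneously: one must choose $\mu$ (equivalently, the tracked index $i$) so that every $d_k$ and every coordinate index occurring in the composition remains inside $\{0,\dots,n-1\}$, and since the running index shift can swing by order $s^2$ around a cycle of length at most $2s+1$, such a choice exists precisely when $n$ is large enough — pinning this down to the stated threshold $n\ge 2s^2+1$, rather than a cruder sufficient bound, is the delicate point. For Part (1) the analogous crux is making the pressing-down reduction to the inductive hypothesis precise, the subtlety being that the two ends of a prospective edge are assigned colors by different auxiliary functions.
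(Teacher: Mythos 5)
This statement is not proved in the paper at all: it is quoted verbatim from Erd\H{o}s--Hajnal \cite[Theorem 7.4]{erdos_hajnal_chromatic_number}, so your argument has to stand on its own, and as written it does not. The fatal problem is in Part (2), at the step you rely on to close the argument. The claim that $d_k=|\{j<n: v_k(j)<\mu\}|$ drops by \emph{exactly} $s$ along each forward edge and rises by \emph{exactly} $s+1$ along each backward edge is false: for a forward edge $a\to b$ the change depends on where $\mu$ sits in the merged enumeration of $a\cup b$ (it is $-(s+1)$ if $a(s+i)<\mu\le b(i)$, $-s$ if $b(i)<\mu\le a(s+i+1)$, $0$ if $\mu\le a(0)$ or $\mu> b(n-1)$, etc.), and a backward traversal of an edge contributes exactly the negative of its forward traversal, so the telescoping sum around \emph{any} closed walk is $0$ automatically and yields no relation between $f$ and $b$. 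The purported identity $sf=(s+1)b$ is moreover simply untrue for cycles in these graphs: it would force every cycle length to be a multiple of $2s+1$, whereas for uncountable $\alpha$ the graph is uncountably chromatic (Part (1)) and hence contains every finite bipartite graph, in particular $4$-cycles, which necessarily have $f=b=2$. Note also that your contradiction never genuinely uses that the cycle is odd, which it must, since even cycles do exist and satisfy all the correct constraints. The good news is that your second chain is the right mechanism and can be completed: composing the lower bounds gives $v_0(i)>v_0(i+sf-(s+1)b)$ and the upper bounds give $v_0(i)<v_0(i+(s+1)f-sb)$, hence $sf<(s+1)b$ and $sb<(s+1)f$; oddness gives $f\ne b$, and if say $f\ge b+1$ then $sb+s\le sf<sb+b$ forces $b\ge s+1$, so $\ell=f+b\ge 2s+3>2s+1$. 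The index bookkeeping you defer is then the routine part: the running shifts move by at most $s+1$ per step over at most $2s+1$ steps, which is exactly why $n\ge 2s^2+1$ is enough.

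Part (1) is a plan rather than a proof. The inductive step --- that pressing down over the last coordinate ``arranges'' a monochromatic $t^n_s$-edge from a monochromatic $t^{n-1}_{s-1}$-edge of an induced coloring --- is precisely the content of the theorem and is nowhere argued; as you yourself note, the two endpoints of a prospective edge receive their colors through different auxiliary data, and no mechanism for reconciling them is given. The anchoring is also off: peeling the outer $0$ and $1$ from $t^n_s$ bottoms out, when $s<n-1$, at the alternating type of length $2(n-s)$, not at $\langle 0,1\rangle$; to continue descending you would need the (unstated) observation that peeling an alternating type and swapping $0\leftrightarrow 1$ returns an alternating type, while the type $\langle 0,\dots,0,1,\dots,1\rangle$ you invoke is not among the $t^n_s$ of the theorem, is not reached by your reduction, and its clique is irrelevant to the graphs at hand. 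Finally, the statement includes $\alpha=\omega$, where Fodor-style pressing down is unavailable and a separate Ramsey-type argument is needed. So both halves require substantially more than what is written; if you want a self-contained treatment, either work out the inductive step in detail or follow the elementary-submodel/chain technique in the spirit of Lemma~\ref{type_lemma}, which is well suited to producing a monochromatic pair realizing a prescribed type.
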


\begin{remark}
	The functions $f_G$ for $G$ of the form $G(\alpha, t^n_1)$ were investigated
	in \cite{avart_luczak_rodl}.
\end{remark}

We end this section with some basic propositions about graphs that will be
useful for us in the proof of Theorem A.

\begin{proposition} \label{chromatic_product}
	Suppose that $G = (V,E)$ is a graph, $k \in \bb{N}$, and $E = \bigcup_{j < k} E_j$.
	For $j < k$, let $G_j = (V, E_j)$. Then $\chi(G) \leq \prod_{j < k} \chi(G_j)$.
\end{proposition}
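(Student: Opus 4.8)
The plan is to use the standard ``product coloring'' construction. For each $j < k$, fix a proper coloring $c_j : V \to \chi(G_j)$ of the graph $G_j$; that is, a function witnessing the value of $\chi(G_j)$, so that $c_j(u) \neq c_j(v)$ whenever $\{u,v\} \in E_j$. I will then combine these into a single coloring of $G$.

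Concretely, set $C = \prod_{j < k} \chi(G_j)$, thought of as the set of functions $\sigma$ with domain $k$ such that $\sigma(j) < \chi(G_j)$ for all $j < k$, so that $|C| = \prod_{j < k} \chi(G_j)$. Define $c : V \to C$ by letting $c(v)$ be the function $j \mapsto c_j(v)$. I claim $c$ is a proper coloring of $G$. Indeed, suppose $\{u, v\} \in E$. Since $E = \bigcup_{j < k} E_j$, there is some $j < k$ with $\{u, v\} \in E_j$, and then $c_j(u) \neq c_j(v)$, so the functions $c(u)$ and $c(v)$ differ at coordinate $j$ and hence $c(u) \neq c(v)$. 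Thus $\chi(G) \leq |C| = \prod_{j < k} \chi(G_j)$, as desired.

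There is essentially no obstacle here: the only points requiring a moment's care are bookkeeping ones. If $k = 0$ then $E = \varnothing$, the product is the empty product $1$, and an edgeless graph has chromatic number at most $1$; if $V = \varnothing$ then every term is $0$ and the inequality is trivial. When some $\chi(G_j)$ is infinite the product is understood as a product of cardinals, and the argument above is unaffected since it only uses that $C$ has the stated cardinality and that distinct coordinates yield distinct elements of $C$. I would state these edge cases briefly (or simply note that the argument is uniform) and otherwise present the product coloring as above.
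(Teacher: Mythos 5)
Your proposal is correct and is essentially the same argument as the paper's: fix proper colorings $c_j$ of the $G_j$ and color each vertex by the tuple of its $c_j$-values, noting that any edge lies in some $E_j$ and hence the tuples differ at that coordinate. The brief remarks on edge cases are fine but not needed.
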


\begin{proof}
	For each $j < k$, let $c_j:V \rightarrow \chi(G_j)$ be a proper coloring of
	$G_j$. Let $\mathcal{H}$ be the set of functions $h:k \rightarrow \mathrm{Ord}$ such that,
	for all $j < k$, $h(k) < \chi(G_j)$. Clearly, $\mathcal{H} = \prod_{j < k} \chi(G_j)$.
	Define a coloring $c:V \rightarrow \mathcal{H}$
	by letting $c(v)(j) = c_j(v)$ for all $v \in V$ and $j < k$. It is easily
	verified that $c$ is a proper coloring of $G$.
\end{proof}

\begin{proposition} \label{walks_to_cycles_prop}
	Suppose that $j \geq 1$ is a natural number, $G$ is a graph, and $G$
	has a closed walk of length $2j + 1$. Then $G$ has an odd cycle of length
	$2j + 1$ or shorter.
\end{proposition}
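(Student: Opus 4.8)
This is a quantitative form of the folklore fact that a closed walk of odd length contains an odd cycle, and I would prove it by an extremal argument rather than a direct induction on $j$. Consider the collection of all closed walks in $G$ whose length is odd and at most $2j+1$; by hypothesis this collection is nonempty, so fix a member $\vec{w} = \langle w_0, w_1, \ldots, w_\ell \rangle$ whose length $\ell$ is as small as possible. Since $G$ has no loops, no closed walk has length $1$, so $\ell \geq 3$. I claim that $\vec{w}$ is a cycle; because $\ell$ is odd and at most $2j+1$, this is exactly what the proposition asks for.

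The one substantive ingredient is the standard observation that a repeated vertex in a closed walk lets one extract a strictly shorter closed walk. Suppose $w_i = w_j$ for some indices with $0 \le i < j \le \ell - 1$. Then $\langle w_i, w_{i+1}, \ldots, w_j \rangle$ and $\langle w_0, \ldots, w_i, w_{j+1}, \ldots, w_\ell \rangle$ are both closed walks in $G$ (for the second, the pair $\{w_i, w_{j+1}\}$ is an edge since $w_i = w_j$), of lengths $j - i$ and $\ell - (j-i)$; both lengths are positive because $1 \le j - i \le \ell - 1$. As $G$ has no loops, neither length equals $1$, so each is at least $2$; since the two lengths sum to the odd number $\ell$, one of them is odd, and that one is then an odd number that is at least $3$ and at most $\ell - 2 < \ell$. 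Thus we have produced a closed walk of odd length strictly smaller than $\ell$ and at most $2j+1$, contradicting the minimality of $\ell$. Hence no such pair $(i,j)$ exists.

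Therefore $\vec{w}$ is a closed walk of length $\ell \ge 3$ with no repetition among $w_0, \ldots, w_{\ell - 1}$, which is exactly the definition of a cycle; this establishes the claim and the proposition. The only point I expect to require any care is the bookkeeping around the no-loops hypothesis: it is what excludes the degenerate length-$1$ closed walks in the splitting step, and it is genuinely indispensable, since were loops allowed, a single loop would already give a closed walk of length $3$ in a graph with no cycle. An essentially equivalent write-up would run the same splitting observation as the inductive step of an induction on $j$, at the cost of a separate (trivial) treatment of the base case $j = 1$.
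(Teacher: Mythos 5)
Your proof is correct and rests on the same key step as the paper's: splitting a non-cycle closed walk at a repeated vertex into two shorter closed walks, one of odd length and neither of length $1$ thanks to the no-loops hypothesis; you merely package this as a minimal-counterexample argument where the paper runs it as an induction on $j$, a difference you yourself note is inessential. The only blemish is notational: you reuse $j$ both for the parameter in the statement and as an index of a repeated vertex, which should be renamed.
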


\begin{proof}
	The proof is by induction on $j$. To take care of the case $j = 1$, simply
	note that a closed walk of length $3$ must be a cycle (since our graphs
	have no loops). Suppose now that $j > 1$ and $\vec{v} = \{v_0, v_1, \ldots, v_{2j + 1}\}$
	is a closed walk in $G$. If $\vec{v}$ is a cycle, then we are done. Otherwise,
	by rotating the walk if necessary, we may assume that there is $i$ with $0 < i < 2j$ such
	that $v_0 = v_i$. Then $\{v_0, \ldots, v_i\}$ and $\{v_i, \ldots, v_{2j + 1}\}$
	are both closed walks in $G$. One of them must have an odd length
	(and neither can have a length of $1$, since our graph has no loops), so
	we can appeal to the inductive hypothesis to obtain our desired conclusion.
\end{proof}

\begin{proposition} \label{graph_homomorphism_prop}
	Suppose that $m \in \bb{N}$, $G = (V_G, E_G)$ and $H = (V_H, E_H)$ are graphs,
	$H$ has no odd cycles of length $m$ or shorter, and there is a graph
	homomorphism from $G$ to $H$. Then $G$ has no odd cycles of length $m$ or shorter.
\end{proposition}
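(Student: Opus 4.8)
The plan is to argue by contraposition: assuming $G$ contains an odd cycle of length $m$ or shorter, I will produce an odd cycle of length $m$ or shorter in $H$. The key observation is that a graph homomorphism preserves walks together with their lengths. Concretely, fix a graph homomorphism $h : V_G \to V_H$, and suppose $\vec{v} = \langle v_0, \dots, v_\ell \rangle$ is a cycle in $G$ with $\ell$ odd and $\ell \le m$. Then $\langle h(v_0), \dots, h(v_\ell) \rangle$ is a walk in $H$: for each $i < \ell$ we have $\{v_i, v_{i+1}\} \in E_G$, so $\{h(v_i), h(v_{i+1})\} \in E_H$ (in particular $h(v_i) \ne h(v_{i+1})$, since $H$ has no loops). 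Moreover $h(v_0) = h(v_\ell)$ because $v_0 = v_\ell$, so this walk is in fact a closed walk of length $\ell$ in $H$.

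Now write $\ell = 2j + 1$; since $\ell \ge 3$, we have $j \ge 1$. Applying Proposition~\ref{walks_to_cycles_prop} to the closed walk $\langle h(v_0), \dots, h(v_\ell) \rangle$ of length $2j + 1$ in $H$, we conclude that $H$ has an odd cycle of length $2j + 1 = \ell$ or shorter, hence of length $m$ or shorter. This contradicts the hypothesis on $H$, completing the proof.

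I do not expect any real obstacle here. The only subtlety worth flagging is that the image of a cycle under a homomorphism need not itself be a cycle, since distinct vertices of the cycle may be identified by $h$; this is precisely why the argument routes through the notion of a closed walk and invokes Proposition~\ref{walks_to_cycles_prop}, rather than attempting to exhibit a cycle in $H$ directly.
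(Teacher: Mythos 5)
Your proof is correct and follows essentially the same route as the paper: push the odd cycle through the homomorphism to obtain a closed walk of the same odd length in $H$, then invoke Proposition~\ref{walks_to_cycles_prop} to extract an odd cycle of length $m$ or shorter, contradicting the hypothesis on $H$. The remark about identified vertices being the reason to pass through closed walks is exactly the point the paper's argument relies on as well.
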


\begin{proof}
	Let $\varphi:V_G \rightarrow V_H$ induce a graph homomorphism from $G$ to $H$.
	If $j \geq 1$ and $\{v_0, \ldots, v_{2j + 1}\}$ is an odd cycle in $G$, then $\{\varphi(v_0), \ldots,
	\varphi(v_{2j + 1})\}$ is a closed path in $H$. By Proposition \ref{walks_to_cycles_prop},
	$H$ must then contain an odd cycle of length $2j + 1$ or shorter. The result
	follows.
\end{proof}

\section{Club guessing} \label{club_guessing_section}

In this section, we review the machinery of club guessing, which will be used in
the proof of Theorem A, and then prove a key lemma regarding the interaction
between club guessing and disjoint types.

\begin{definition}
	Suppose that $\kappa < \lambda$ are regular cardinals and $S \subseteq S^\lambda_\kappa$
	is stationary. A \emph{club-guessing sequence} on $S$ is a sequence
	$\vec{C} = \langle C_\alpha \mid \alpha \in S \rangle$ such that
	\begin{itemize}
		\item for every $\alpha \in S$, $C_\alpha$ is a club in $\alpha$ of order type $\kappa$;
		\item for every club $D$ in $\lambda$, there are stationarily many
		$\alpha \in S$ such that $C_\alpha \subseteq D$.
	\end{itemize}
\end{definition}

Shelah proved that, if there is at least a one-cardinal gap between $\kappa$
and $\lambda$, then club-guessing sequences always exist.

\begin{theorem}[Shelah \cite{cardinal_arithmetic}] \label{club_guessing_theorem}
	Suppose that $\kappa < \lambda$ are regular cardinals, $\kappa^+ < \lambda$,
	and $S \subseteq S^\lambda_\kappa$ is stationary. Then there is a club-guessing
	sequence on $S$.
\end{theorem}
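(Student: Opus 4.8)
The plan is to argue by contradiction, via the iteration technique due to Shelah. I would first record a trivial reduction: if there is no club-guessing sequence on $S$, then there is none on any stationary $S' \subseteq S$ either, since from a club-guessing sequence on $S'$ one obtains one on $S$ by assigning to each $\alpha \in S \setminus S'$ an arbitrary club in $\alpha$ of order type $\kappa$ (the ``stationarily many'' clause is still witnessed inside $S'$). So, assume toward a contradiction that no stationary subset of $S$ carries a club-guessing sequence, and fix once and for all a sequence $\langle C_\alpha \mid \alpha \in S \rangle$ with each $C_\alpha$ a club in $\alpha$ of order type $\kappa$; such a sequence exists because $\cf(\alpha) = \kappa$ for all $\alpha \in S$.

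The core of the argument is to build, by recursion on $i < \kappa^+$, a $\subseteq$-decreasing sequence $\langle E_i \mid i < \kappa^+ \rangle$ of clubs in $\lambda$. Set $E_0 = \lambda$, and at limit stages $i$ set $E_i = \bigcap_{j < i} E_j$; since $|i| \le \kappa < \lambda$ and $\lambda$ is regular, each $E_i$ is again a club. At a successor stage $i + 1$, consider the stationary set $S \cap \acc(E_i)$: for each $\alpha$ in it, $E_i \cap \alpha$ is a club in $\alpha$, so, using that $\cf(\alpha) = \kappa$ is uncountable, $C_\alpha \cap E_i$ is a club in $\alpha$ of order type $\kappa$. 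Thus $\langle C_\alpha \cap E_i \mid \alpha \in S \cap \acc(E_i) \rangle$ is a legitimate candidate club-guessing sequence on $S \cap \acc(E_i)$, so by our assumption it fails: there is a club $D \subseteq \lambda$ for which $\{\alpha \in S \cap \acc(E_i) \mid C_\alpha \cap E_i \subseteq D\}$ is nonstationary. Fixing a club $F$ disjoint from this set, put $E_{i+1} = D \cap F \cap \acc(E_i)$, which is a club contained in $E_i$ such that every $\alpha \in S \cap \acc(E_{i+1})$ lies in $F \cap S \cap \acc(E_i)$ and hence satisfies $C_\alpha \cap E_i \not\subseteq D \supseteq E_{i+1}$.

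To finish, let $E^* = \bigcap_{i < \kappa^+} E_i$; this is a club in $\lambda$ precisely because $\kappa^+ < \lambda$ and $\lambda$ is regular, and this is exactly where the hypothesis $\kappa^+ < \lambda$ is needed. Since $S$ is stationary and $\acc(E^*)$ is a club, choose $\alpha \in S \cap \acc(E^*)$. For every $i < \kappa^+$ we have $\alpha \in \acc(E^*) \subseteq \acc(E_{i+1})$, so by construction $C_\alpha \cap E_i \not\subseteq E_{i+1}$; pick $\beta_i \in (C_\alpha \cap E_i) \setminus E_{i+1}$. Then $\beta_i \in E_i \setminus E_{i+1}$, and since the $E_i$ form a $\subseteq$-decreasing sequence, the sets $E_i \setminus E_{i+1}$ for $i < \kappa^+$ are pairwise disjoint; hence $i \mapsto \beta_i$ is injective. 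But each $\beta_i$ lies in $C_\alpha$, a set of order type $\kappa$, so $\kappa^+ \le |C_\alpha| = \kappa$, a contradiction.

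In broad strokes the argument is routine once this framework is set up; the one real idea is the closing count, in which the $\subseteq$-decreasing chain of clubs $\langle E_i \rangle$ is converted, via its pairwise-disjoint ``layers'' $E_i \setminus E_{i+1}$, into an injection of $\kappa^+$ into a fixed set of size $\kappa$. The remaining points needing care are essentially bookkeeping: tracking that the index set shrinks from $S$ to $S \cap \acc(E_i)$ at each stage (handled by the reduction in the first paragraph) and that all the clubs involved, including the length-$\kappa^+$ intersection $E^*$, remain clubs (this is where $\kappa^+ < \lambda$ enters). The genuine subtlety I would expect to wrestle with is the case $\kappa = \omega$: then $C_\alpha \cap E_i$ need not be unbounded in $\alpha$, so the argument above must be modified, maintaining instead a $\subseteq$-descending chain of countable cofinal subsets of $\alpha$ and exploiting that such a chain of length $\omega_1$ must stabilize, so that the desired sequence can be read off at the point of stabilization.
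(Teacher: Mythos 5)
The paper itself quotes this result from Shelah without proof, so your argument has to stand on its own. The core of your proposal --- the $\subseteq$-decreasing $\kappa^+$-chain of clubs built from failure witnesses, the choice of $\alpha \in S \cap \acc(E^*)$, and the count of the pairwise disjoint layers $E_i \setminus E_{i+1}$ against $|C_\alpha| = \kappa$ --- is the standard argument and is correct, but, as you yourself flag, only when $\kappa$ is uncountable: uncountability is used exactly where you claim that $C_\alpha \cap E_i$ is again a club in $\alpha$ of order type $\kappa$. The problem is that the case you postpone, $\kappa = \omega$, is not peripheral: it is the only instance the paper uses (the theorem is applied to $S = S^{\omega_2}_\omega$ in the proof of Theorem A), so as written your proof does not establish the statement actually being invoked.

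Moreover, the one-sentence fix you sketch does not work as stated. You cannot maintain a literally $\subseteq$-descending chain of cofinal $\omega$-sequences through the construction: when passing from $E_i$ to $E_{i+1}$, the set $C^i_\alpha \cap E_{i+1}$ may be bounded in $\alpha$, which is precisely the obstruction you are trying to circumvent, so the proposed bookkeeping just reproduces it. Shelah's repair is different: for $\alpha \in S \cap \acc(E_i)$ one replaces $C_\alpha$ by its \emph{projection} to $E_i$, e.g.\ $C^i_\alpha = \{\sup(E_i \cap (\beta+1)) \mid \beta \in C_\alpha\} \setminus \{0\}$, which is again cofinal in $\alpha$ of order type $\omega$ but need not be a subset of any earlier $C^j_\alpha$; one runs your successor step with these candidates. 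The stabilization is then pointwise: for each fixed $\beta \in C_\alpha$ the ordinals $\sup(E_i \cap (\beta+1))$ are non-increasing in $i$, hence eventually constant, and since $C_\alpha$ is countable and $\kappa^+ = \omega_1$ is regular there is a single $i^* < \omega_1$ with $C^i_\alpha = C^{i^*}_\alpha \subseteq E^* = \bigcap_{i<\omega_1} E_i$ for all $i \geq i^*$. Choosing $\alpha \in S \cap \acc(E^*)$ (so that $C^{i^*}_\alpha$ is cofinal in $\alpha$, and so that $\alpha$ lies in the club $F_{i^*}$ avoiding the nonstationary set at stage $i^*$, which your definition of $E_{i^*+1}$ already arranges) yields $C^{i^*}_\alpha \subseteq E^* \subseteq E_{i^*+1} \subseteq D_{i^*}$, contradicting the choice of $D_{i^*}$. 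With this projection-and-stabilization argument in place of intersections, your framework does give the theorem for $\kappa = \omega$ as well; without it, the gap sits exactly at the case the paper needs.
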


\begin{proposition} \label{club_guessing_decomposition_prop}
	Suppose that $\kappa < \lambda$ are regular cardinals,
	$S \subseteq S^\lambda_\kappa$ is stationary, and $\langle C_\alpha \mid
	\alpha \in S \rangle$ is a club-guessing sequence. Suppose moreover that
	$\mu < \lambda$ and $S = \bigcup_{\eta < \mu} S_\eta$. Then there is
	$\eta < \mu$ such that $\langle C_\alpha \mid \alpha \in S_\eta \rangle$
	is a club-guessing sequence.
\end{proposition}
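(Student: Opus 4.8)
The plan is to argue by contradiction. Suppose that for every $\eta < \mu$ the sequence $\langle C_\alpha \mid \alpha \in S_\eta \rangle$ fails to be a club-guessing sequence. For each such $\eta$, note first that $S_\eta \subseteq S \subseteq S^\lambda_\kappa$ and that the first clause in the definition of a club-guessing sequence is automatically satisfied, since each $C_\alpha$ with $\alpha \in S_\eta$ is still a club in $\alpha$ of order type $\kappa$. Hence the failure must occur in the guessing clause: there is a club $D_\eta$ in $\lambda$ such that the set $T_\eta := \{\alpha \in S_\eta \mid C_\alpha \subseteq D_\eta\}$ is nonstationary. (If $S_\eta$ itself happens to be nonstationary, one may simply take $D_\eta = \lambda$, since then $T_\eta = S_\eta$.)

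The next step is to intersect these witnesses. Since $\lambda > \kappa$ and both are regular, $\lambda$ is an uncountable regular cardinal, so the club filter on $\lambda$ is $\lambda$-complete; as $\mu < \lambda$, the set $D := \bigcap_{\eta < \mu} D_\eta$ is therefore a club in $\lambda$. Because $\langle C_\alpha \mid \alpha \in S \rangle$ is a club-guessing sequence on $S$, it follows that the set $\{\alpha \in S \mid C_\alpha \subseteq D\}$ is stationary.

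Finally I would derive the contradiction by decomposing this stationary set along the given partition. For each $\alpha \in S$ with $C_\alpha \subseteq D$, fix $\eta < \mu$ with $\alpha \in S_\eta$; since $D \subseteq D_\eta$, we get $C_\alpha \subseteq D_\eta$, so $\alpha \in T_\eta$. Thus $\{\alpha \in S \mid C_\alpha \subseteq D\} \subseteq \bigcup_{\eta < \mu} T_\eta$. But each $T_\eta$ is nonstationary, and the nonstationary ideal on $\lambda$ is $\lambda$-complete with $\mu < \lambda$, so $\bigcup_{\eta < \mu} T_\eta$ is nonstationary — contradicting the stationarity of its superset $\{\alpha \in S \mid C_\alpha \subseteq D\}$.

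The argument is essentially routine; the only point that requires a moment's care is the observation that a failure of the club-guessing property for $\langle C_\alpha \mid \alpha \in S_\eta \rangle$ must be a failure of the guessing clause (the first clause being inherited for free), together with the bookkeeping that $D \subseteq D_\eta$ for every $\eta$. Everything else is powered by the $\lambda$-completeness of the club filter and the nonstationary ideal on $\lambda$, which is available because $\lambda > \kappa$ is regular and hence uncountable.
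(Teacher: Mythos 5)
Your proof is correct and follows essentially the same route as the paper: assume failure for every $\eta < \mu$, intersect the $\mu$-many witnessing clubs using the regularity of $\lambda$, and contradict the club-guessing property of $\langle C_\alpha \mid \alpha \in S \rangle$. The only (harmless) difference is that you work with the literal negation of the guessing clause (a nonstationary set $T_\eta$) and invoke $\lambda$-completeness of the nonstationary ideal, whereas the paper takes each $D_\eta$ so that no $\alpha \in S_\eta$ has $C_\alpha \subseteq D_\eta$; your handling is a slightly more careful rendering of the same argument.
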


\begin{proof}
	Suppose not. This means that, for every $\eta < \mu$, there is a club
	$D_\eta \subseteq \lambda$ such that, for all $\alpha \in S_\eta$,
	$C_\alpha \not\subseteq D_\eta$. Let $D = \bigcap_{\eta < \mu} D_\eta$.
	Since $\lambda$ is regular and $\mu < \lambda$, it follows that $D$ is a club in $\lambda$.
	Also, $D \subseteq D_\eta$ for every $\eta < \mu$, so it follows that, for
	every $\eta < \mu$ and every $\alpha \in S_\eta$, $C_\alpha \not\subseteq
	D$. But $S = \bigcup_{\eta < \mu} S_\eta$, so, for all $\alpha \in S$,
	$C_\alpha \not\subseteq D$, contradicting the fact that
	$\langle C_\alpha \mid \alpha \in S \rangle$ is a club-guessing sequence.
\end{proof}

We now prove that the initial segments of the elements of a club-guessing sequence
indexed by a subset of $S^\lambda_\omega$ realize every disjoint type.

\begin{lemma} \label{type_lemma}
	Suppose that $\lambda$ is an uncountable regular cardinal,
	$S \subseteq S^\lambda_\omega$ is stationary, and $\vec{C} =
	\langle C_\delta \mid \delta \in S \rangle$ is a club-guessing sequence.
	Suppose moreover that $n < \omega$ and $t:2n \rightarrow 2$ is a
	disjoint type of length $n$. Then there are $\gamma < \delta$, both in $S$, such that
	$C_\gamma[n]$ and $C_\delta[n]$ are disjoint and $\tp(C_\gamma[n], C_\delta[n]) = t$.
\end{lemma}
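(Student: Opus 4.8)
The plan is to use the club-guessing property to find, inside a carefully constructed club $D$, two ordinals $\gamma < \delta$ in $S$ such that $C_\gamma \subseteq D$, $C_\delta \subseteq D$, and the first $n$ elements of $C_\gamma$ and $C_\delta$ interleave in the pattern prescribed by $t$. The key realization is that because each $C_\delta$ has order type $\omega$, the set $C_\delta[n]$ consists of its $n$ smallest elements, and these are below $\min(C_\delta \setminus C_\delta[n])$, i.e.\ below some ordinal that can be controlled by a closure argument. So the strategy is to build $D$ as the set of closure points of a suitable continuous increasing function that leaves ``room'' between successive relevant ordinals to slot in the pattern $t$.

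First I would fix $t : 2n \to 2$ and let $a = \{i < 2n \mid t(i) = 0\}$, $b = \{i < 2n \mid t(i) = 1\}$, so $|a| = |b| = n$. I would then define, by recursion on $\lambda$, a strictly increasing continuous function $g : \lambda \to \lambda$ together with enough bookkeeping so that the club $D = \{\xi < \lambda \mid g(\xi) = \xi\}$ of fixed points has the following feature: between any two consecutive elements of $D$ there are at least $2n$ further elements of $D$ available — more precisely, I want $D$ to be closed under a ``next $2n$ points'' operation so that given a small initial block of $D$ I can always find the next one above it. Actually the cleanest approach: let $D$ be any club in $\lambda$ of order type $\lambda$ (e.g.\ the limit points of $\lambda$, or all of $\lambda$ if $\lambda$ is already a limit of limits — since $\lambda$ is regular uncountable this is automatic), and by club-guessing pick $\delta \in S$ with $C_\delta \subseteq D$. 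Now $C_\delta[n]$ is a set of $n$ points of $D$; I want another $\gamma \in S$, $\gamma < \delta$, with $C_\gamma \subseteq D$ and with $C_\gamma[n]$ interleaving $C_\delta[n]$ correctly.

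The mechanism for the second ordinal: the set of $\alpha < \lambda$ such that $C_\delta[n] \cap \alpha = C_\delta[n] \cap \alpha_0$ for the appropriate truncation — this is getting complicated, so let me restructure. The right way is to apply club-guessing twice with nested clubs. Fix $\delta^* \in S$ with $C_{\delta^*} \subseteq D$; write $C_{\delta^*}[n] = \{\beta_0 < \beta_1 < \dots < \beta_{n-1}\}$ and let $\beta_n = C_{\delta^*}(n)$ (the $(n{+}1)$st element), so all of $C_{\delta^*}[n]$ lies below $\beta_n$. Now I build a second club $D' \subseteq D$ designed so that if $C_\gamma \subseteq D'$ then $C_\gamma[n]$ fits in the gaps of $C_{\delta^*}[n]$ as dictated by $t$: concretely, $D'$ should be chosen so that its first $|a \cap (\text{initial segment before each }\beta_j)|$-many points sit in the right intervals $(\beta_{j-1}, \beta_j)$. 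Since between consecutive elements of $D$ (taken as the limit ordinals) there are $\lambda$-many ordinals but possibly few elements of $D$ itself, I should instead take $D$ from the start to be \emph{closed under the operation $\xi \mapsto$ (the $(2n)$th element of $D$ above $\xi$)}, which I can arrange since that is just a closure condition definable once $D$ is the fixed-point set of a fast-enough continuous function. Then inside each gap of $C_{\delta^*}[n]$ there are still infinitely many (or at least $n$ many) elements of $D$, and I pick $\gamma \in S$ guessing a sub-club $D' \subseteq D$ whose elements have been pre-placed into exactly those gaps. Finally, $\tp(C_\gamma[n], C_{\delta^*}[n]) = t$ or $= \check{t}$ by construction; since we get to choose which of $\gamma, \delta^*$ plays which role and the definition of $G(\alpha,t)$ symmetrizes $t$ anyway — but here the lemma asks for $\tp$ in a fixed order with $\gamma < \delta$, so I must be careful to realize $t$ itself, which means ensuring the larger of the two initial blocks' ordinals ends up where $t$ says the larger-indexed coordinate goes; this is handled by which club I guess first.

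The main obstacle I anticipate is precisely this last bookkeeping: ensuring that the \emph{order} of $\gamma$ and $\delta$ (which is not under our direct control — club-guessing just hands us stationarily many guessing ordinals) is compatible with realizing $t$ rather than its reverse or some other type. I expect to resolve it by noting that the relevant property — ``there exist $\gamma < \delta$ in $S$ with $C_\gamma[n], C_\delta[n]$ interleaving as $t$'' — only requires finding \emph{two} distinct guessing ordinals inside a suitably prepared club, and by making the preparation depend only on $t$ (not on the identities of $\gamma,\delta$) via a single club $D$ all of whose relevant $n$-blocks already interleave every possible way. Then one invocation of the club-guessing property on $D$, yielding stationarily many $\delta$ with $C_\delta \subseteq D$, gives us two such ordinals $\gamma < \delta$, and a final combinatorial check — essentially counting which elements of $D$ fall in which gaps — confirms $\tp(C_\gamma[n], C_\delta[n]) = t$.
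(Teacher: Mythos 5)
There is a genuine gap, and it is exactly at the point your plan leans on hardest: you try to obtain \emph{both} ordinals from the club-guessing property, controlling where $C_\gamma[n]$ falls by shrinking to a club $D' \subseteq D$ whose points below $\ssup(C_{\delta^*}[n])$ have been ``pre-placed'' into the gaps of $C_{\delta^*}[n]$. But $C_\gamma \subseteq D'$ gives no control whatsoever over the location of the first $n$ points of $C_\gamma$ relative to a fixed bounded configuration. The trace of a club $D'$ below the bound $\beta_n = C_{\delta^*}(n)$ is just some (possibly finite) set you have added by hand, and a guessing ordinal $\gamma$ with $C_\gamma \subseteq D'$ is merely required to have $C_\gamma$ \emph{contained} in $D'$: since $C_\gamma$ has order type $\omega$ and converges to $\gamma$, nothing forces any element of $C_\gamma$ to lie below $\beta_n$ at all, let alone forces exactly the first $n$ elements to land one-by-one in the prescribed gaps. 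The same objection kills the fallback version at the end of your plan (``one club $D$ all of whose $n$-blocks interleave every possible way, then two guessing ordinals $\gamma < \delta$ and a combinatorial check''): two guessing ordinals for the same club can have $C_\gamma[n]$ and $C_\delta[n]$ interleaved in any pattern whatsoever, or not interleaved at all, because the clubs $C_\alpha$ are attached to the ordinals by the fixed sequence $\vec{C}$ and are not yours to rearrange. So no choice of $D$, and no counting of which elements of $D$ fall in which gaps, can certify the specific type $t$.

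The missing idea is that the second ordinal $\gamma$ should \emph{not} come from club guessing at all. The paper guesses only once, but into a club $D$ of traces $M_\xi \cap \lambda$ of an increasing chain of elementary submodels containing $\vec{C}$ and $S$; this makes each $C_\delta(i)$ of the guessed $\delta$ equal to $N_i \cap \lambda$ for an elementary submodel $N_i$. Since $\delta$ itself witnesses statements of the form ``there exist arbitrarily large $\beta$ and some $\gamma \in S$ with $C_\gamma[n]$ end-extending a given partial pattern by $\beta$,'' elementarity reflects these statements into the models $N_{s(i)}$, and one recursively chooses $\beta_0^* < \dots < \beta_{n-1}^*$, with $\beta_i^*$ placed in exactly the interval of $C_\delta[n]$ dictated by $t$ (via the function $s$ counting the $1$s preceding the $i$th $0$ of $t$), ending with a witness $\gamma \in S \cap \delta$ such that $C_\gamma[n] = \{\beta_0^*, \dots, \beta_{n-1}^*\}$. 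This reflection step is what gives pointwise control over $C_\gamma[n]$, and it is precisely what a second application of club guessing cannot provide; as written, your construction of $D'$ does not yield the lemma, and the final ``combinatorial check'' has nothing to check against.
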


\begin{proof}
	Let $\theta$ be a sufficiently large regular cardinal, let $\vartriangleleft$
	be a fixed well-ordering of $H(\theta)$, and let $\langle M_\xi \mid \xi <
	\lambda \rangle$ be a continuous $\in$-increasing
	chain of elementary submodels of $(H(\theta), \in, \vartriangleleft)$
	such that
	\begin{itemize}
		\item $\vec{C}, S \in M_0$;
		\item for all $\xi < \lambda$, $|M_\xi| < \lambda$;
		\item for all $\xi < \lambda$, $\beta_\xi := M_\xi \cap \lambda$
			is an ordinal.
	\end{itemize}
	Let $D = \{\beta_\xi \mid \xi < \lambda\}$.
	Then $D$ is a club in $\lambda$, so we can fix $\delta \in S$ such
	that $\beta_\delta = \delta$ and
	$C_\delta \subseteq D$. For each $i < \omega$, let $\xi_i$ be the unique ordinal $\xi$ such that
	$\beta_{\xi} = C_\delta(i)$, and let $N_i = M_{\xi_i}$. Let $N_\omega = M_\delta$. Then
	$\langle N_i \mid i \leq \omega \rangle$ is an $\in$-increasing chain of
	elementary submodels,
	$\delta = N_\omega \cap \lambda$ and, for $i < \omega$, $C_\delta(i) = N_i \cap \lambda$.

	Let $\exists^\infty \alpha$ stand for the quantifier ``there are unboundedly many $\alpha < \lambda$ such that \ldots".
	(This is the same as $\forall \eta < \lambda \exists \alpha < \lambda (\eta < \alpha \text{ and} \ldots)$.)
	Notice that, for all $\eta < C_\delta(n-1)$, there is $\beta > \eta$ for which there is $\gamma \in S$ such that
	$C_\gamma[n] = C_\gamma[n-1] \cup \{\beta\}$ (namely, $\beta = C_\delta(n-1)$ and $\gamma = \delta$ witness this statement). By
	elementarity, observing that $C_\delta[n-1] \in N_{n-1}$ it follows that
	\[
		N_{n-1} \models \exists^\infty \beta_{n-1} ~ \exists \gamma \in S \left(
		C_\gamma[n] = C_\delta[n-1] \cup \{\beta_{n-1}\} \right).
	\]
	By another application of elementarity, $H(\theta)$ satisfies the same statement, so, for all
	$\eta < C_\delta(n-2)$, there is $\beta > \eta$ for which the following statement holds:
	\[
		\exists^\infty \beta_{n-1} ~ \exists \gamma \in S \left(
		C_\gamma[n] = C_\delta[n-2] \cup \{\beta, \beta_{n-1}\}\right).
	\]
	Namely, $\beta = C_\delta(n-2)$ witnesses this statement. As above, we obtain
	\[
		N_{n-2} \models \exists^\infty \beta_{n-2} ~ \exists^\infty \beta_{n-1} ~ \exists \gamma \in S
		\left(C_\gamma[n] = C_\delta[n-2] \cup \{\beta_{n-2}, \beta_{n-1}\}\right).
	\]
	Again, it follows that $H(\theta)$ satisfies the same statement. Continuing in this way,
	the following statement holds in $H(\theta)$ and hence in every $N_i$:
	\[
		\exists^\infty \beta_0 \ldots \exists^\infty \beta_{n-1} ~ \exists \gamma \in S
		\left(C_\gamma[n] = \{\beta_0, \ldots, \beta_{n-1}\}\right).
	\]

	Define an auxiliary function $s:n \rightarrow n + 1$ as follows. Given
	$i < n$, let $s(i)$ be the number of $1$s appearing before the $i^{\mathrm{th}}$
	(starting at zero)
	$0$ in the sequence representation of $t$. For example, if $t = 001011$, then
	$s(0) = s(1) = 0$ and $s(2) = 1$.
	We now recursively choose $\beta_0^* < \beta_1^* < \ldots < \beta_{n-1}^*$, ensuring that, for
	all $i < n$, $\beta_i^* < C_\delta(s(i))$ and
	\[
		\exists^\infty \beta_{i+1} \ldots \exists^\infty \beta_{n-1} ~ \exists \gamma \in S
		\left(C_\gamma[n] = \{\beta_0^*, \ldots, \beta_i^*\} \cup \{\beta_{i+1}, \ldots, \beta_{n-1}\}\right).
	\]
	We will also arrange so that, if $s(i) > 0$, then $\beta_i^* > C_\delta(s(i) - 1)$.

	The construction is straightforward. If $i < n$ and we have already chosen $\{\beta_j^* \mid j < i\}$,
	then, by our recursion hypotheses, we have $\{\beta_j^* \mid j < i\} \in N_{s(i)}$ and
	\[
		N_{s(i)} \models \exists^\infty \beta_i \ldots \exists^\infty \beta_{n-1} ~ \exists \gamma \in S
		\left(C_\gamma[n] = \{\beta_j^* \mid j < i\} \cup \{\beta_i, \ldots, \beta_{n-1}\}\right),
	\]
	so we can choose $\beta_i^*$ witnessing this statement such that $\max\{\beta_j^* \mid j < i\} < \beta_i^*
	< C_\gamma(s(i))$ and such that, if $s(i) > 0$, then $\beta_i^* > C_\gamma(s(i) - 1)$.

	At the end of the construction, we have
	\[
		N_\omega \models \exists \gamma \in S \left(C_\gamma[n] = \{\beta_0^*, \ldots, \beta_{n-1}^*\}\right),
	\]
	so we can choose $\gamma \in S \cap \delta$ witnessing this statement. We constructed
	$\{\beta_i^* \mid i < n\}$ precisely so that $\tp(\{\beta_i^* \mid i < n\}, C_\delta[n]) = t$,
	so $\gamma$ and $\delta$ are as desired.
\end{proof}

\section{Proof of Theorem A} \label{main_proof_section}

We are now ready to prove Theorem A; we restate it here for convenience.

\begin{THMA}
	For every function $f:\bb{N} \rightarrow \bb{N}$, there is a graph
	$G$ such that $|G| = 2^{\aleph_1}$, $\chi(G) = \aleph_1$ and, for every natural
	number $k \geq 3$, $f_G(k) \geq f(k)$.
\end{THMA}

\begin{proof}
	Fix a function $f:\bb{N} \rightarrow \bb{N}$. We will construct a graph $G$
	such that $|G| = 2^{\aleph_1}$, $\chi(G) = \aleph_1$ and, for every $k \in
	\mathbb{N}$, if $H$ is a subgraph of $G$ with at most $f(k)$ vertices, then
	$\chi(H) \leq 2^{k+1}$. This clearly suffices for the theorem.

	For each $k \in \bb{N}$, let
	$s_k$ be the smallest natural number $s \geq 1$ such that $2s+1 \geq f(k)$, and let
	$n_k = 2s_k^2 + 1$. Note that, by Clause (2) of Theorem~\ref{specker_graph_theorem},
	the graph $G(\omega_2, t^{n_k}_{s_k})$ has no odd cycles of length $f(k)$
	or shorter. Partition $\bb{N}$ into adjacent intervals $\{I_0, I_1, \ldots \}$,
	with $|I_k| = n_k$. More precisely, set $I_0 = \{0, 1, \ldots, n_0 - 1\}$
	and, if $k \in \bb{N}$ and $I_k$ has been specified, then let $m_{k + 1} =
	\max(I_k) + 1$ and set $I_{k+1} = \{m_{k + 1}, m_{k + 1}+1, \ldots, m_{k+1} + n_{k+1}-1\}$.

	Let $S = S^{\omega_2}_{\omega}$. By Theorem~\ref{club_guessing_theorem},
	we can fix a club-guessing sequence $\vec{C} = \langle C_\alpha \mid \alpha
	\in S \rangle$. For $\beta \in S$, let $\Sigma_\beta$ be the set of
	all functions $\sigma:S \cap \beta \rightarrow \bb{N}$, let
	$\Sigma_{< \beta} = \bigcup_{\alpha \in S \cap \beta} \Sigma_\alpha$,
	and let $\Sigma = \bigcup_{\alpha \in S} \Sigma_\alpha$. For $\sigma \in
	\Sigma$, let $\alpha_\sigma$ be the unique $\alpha \in S$ such that
	$\sigma \in \Sigma_\alpha$. Clearly, $|\Sigma| = \aleph_2 \cdot
	\aleph_0^{\aleph_1} = 2^{\aleph_1}$. We will define a graph $G = (\Sigma, E)$
	that will be as desired.
	We will simultaneously be defining an auxiliary function
	$h:E \rightarrow \bb{N}$. These objects will satisfy the following
	requirements.
	\begin{enumerate}
		\item If $\sigma, \tau \in \Sigma$ and $\{\sigma, \tau\} \in E$, then
			either $\sigma \sqsubseteq \tau$ or $\tau \sqsubseteq \sigma$.
		\item For all $\sigma \sqsubseteq \tau$ in $\Sigma$ and all $k \in \bb{N}$, if
			$\{\sigma, \tau\} \in E$ and $h(\{\sigma, \tau\}) = k$, then
			\begin{enumerate}
				\item $\tau(\alpha_\sigma) = k$;
				\item for all $j \leq k$, we have that $C_{\alpha_\sigma}[I_j]$ and
					$C_{\alpha_\tau}[I_j]$ are disjoint and $\tp(C_{\alpha_\sigma}[I_j],
					C_{\alpha_\tau}[I_j]) = t^{n_j}_{s_j}$.
			\end{enumerate}
		\item For all $\tau \in \Sigma$ and all $k \in \bb{N}$, there is at most
			one $\sigma \sqsubseteq \tau$ such that $\{\sigma, \tau\} \in E$ and
			$h(\{\sigma, \tau\}) = k$.
	\end{enumerate}

	To define $G$, it clearly suffices to specify, for each $\tau \in \Sigma$,
	the set of $\sigma \sqsubseteq \tau$ such that $\{\sigma, \tau\} \in E$.
	The value of $h(\{\sigma, \tau\})$ for such $\sigma$ will then be determined by requirement
	$(2)(a)$ above as $h(\{\sigma, \tau\}) = \tau(\alpha_\sigma)$. To this end,
	fix $\beta \in S$ and $\tau \in \Sigma_\beta$. For each $k \in \bb{N}$, ask
	the following question: is there $\alpha \in S \cap \beta$ such that
	$\tau(\alpha) = k$ and, for all $j \leq k$, $C_\alpha[I_j]$ and
	$C_\beta[I_j]$ are disjoint and $\tp(C_\alpha[I_j], C_\beta[I_j]) =
	t^{n_j}_{s_j}$? If the answer is ``yes", then let $\alpha^\tau_k$
	be the least such $\alpha$ and place $\{\tau \restriction (S \cap
	\alpha^\tau_k), \tau\}$ in $E$. If the answer is ``no", then there will be no
	$\sigma \sqsubseteq \tau$ such that $\{\sigma, \tau\} \in E$ and $h(\sigma,
	\tau) = k$. This completes the construction of $G$; it is clear that we have
	satisfied the requirements listed above.

	For each $k \in \bb{N}$, set $E_k = \{\{\sigma, \tau\} \in E \mid h(\{\sigma,
	\tau\}) = k\}$ and $E_{\geq k} = \{\{\sigma, \tau\} \in E \mid h(\{\sigma,
	\tau\}) \geq k\}$.

	\begin{claim} \label{decomposition_claim}
		For all $k \in \bb{N}$,
		\begin{enumerate}
			\item $(\Sigma, E_k)$ is cycle-free;
			\item $(\Sigma, E_{\geq k})$ has no odd cycles of length $f(k)$ or shorter.
		\end{enumerate}
	\end{claim}

	\begin{proof}
		(1) Suppose that $k, \ell \in \bb{N}$ and $\langle \sigma_0, \ldots,
		\sigma_{\ell}\rangle$ enumerates a cycle in $(\Sigma, E_k)$. By rotating the
		elements if necessary, we may assume that $\alpha_{\sigma_0} \geq \alpha_{\sigma_j}$
		for all $j \leq \ell$. In particular, we have $\sigma_1, \sigma_{\ell - 1}
		\sqsubseteq \sigma_0$ and $h(\{\sigma_1, \sigma_0\}) = k = h(\{\sigma_{\ell - 1},
		\sigma_0\})$. But this contradicts requirement (3) in the construction of
		$G$.

		(2) Fix $k \in \bb{N}$. By requirement (2)(b) in our construction of $G$, the function $\varphi:\Sigma
		\rightarrow [\omega_2]^{n_k}$ defined by $\varphi(\sigma) = C_{\alpha_\sigma}
		[I_k]$ induces a graph homomorphism from $(\Sigma, E_{\geq k})$ to
		$G(\omega_2, t^{n_k}_{s_k})$. Since $G(\omega_2, t^{n_k}_{s_k})$ contains no
		odd cycles of length $f(k)$ or shorter, Proposition \ref{graph_homomorphism_prop}
		implies that $(\Sigma, E_{\geq k})$ also contains no such odd cycles.
	\end{proof}

	We can now show that the finite subgraphs of $G$ behave as desired.

	\begin{claim}
		Suppose that $k \in \bb{N}$ and $H = (V_H, E_H)$ is a subgraph of $G$
		with $|V_H| \leq f(k)$. Then $\chi(H) \leq 2^{k+1}$.
	\end{claim}

	\begin{proof}
		Note that $E_H = (E_H \cap E_{\geq k}) \cup \bigcup_{j < k}(E_H \cap
		E_j)$. By Clause (1) of Claim~\ref{decomposition_claim}, for all $j < k$,
		$H_j := (V_H, E_H \cap E_j)$ is cycle-free and hence has chromatic number at most $2$.
		By Clause (2) of Claim~\ref{decomposition_claim} and the fact that
		$|V_H| \leq f(k)$, it follows that $H_{\geq k} := (V_H, E_H \cap E_{\geq k})$ has no
		odd cycles and thus also has chromatic number at most 2. Proposition
		\ref{chromatic_product} then implies that
		\[
			\chi(H) \leq \chi(H_{\geq k}) \cdot \prod_{j < k}
			\chi(H_j) \leq 2 \cdot 2^k = 2^{k+1}.
		\]
	\end{proof}

	To finish the proof of the theorem, it remains to show that $\chi(G) =
	\aleph_1$. First note that, by construction, for each $\tau \in \Sigma$,
	there are only countably many $\sigma \sqsubseteq \tau$ with $\{\sigma,
	\tau\} \in E$. It is thus straightforward to define a proper coloring
	$c:\Sigma \rightarrow \omega_1$ of $G$ by recursion on $\alpha_\sigma$.
	Therefore, $\chi(G) \leq \aleph_1$.

	To see that $\chi(G) \geq \aleph_1$,
	suppose for sake of contradiction that $c:\Sigma \rightarrow \bb{N}$ is
	a proper coloring of $G$. Recursively define a function
	$\rho:S \rightarrow \bb{N}$ by letting
	\[
		\rho(\alpha) = c(\rho \restriction (S \cap \alpha))
	\]
	for all $\alpha \in S$. For $k \in \mathbb{N}$, let $S_k = \{\alpha \in S \mid \rho(\alpha) = k\}$.
	By Proposition~\ref{club_guessing_decomposition_prop}, we can fix
	$k \in \mathbb{N}$ such that $\{C_\alpha \mid \alpha \in S_k\}$ is a
	club-guessing sequence. By Lemma~\ref{type_lemma}, we can find
	$\alpha^* < \beta^*$ in $S_k$ such that, for all $j \leq k$,
	$C_{\alpha^*}[I_j]$ and $C_{\beta^*}[I_j]$ are disjoint and
	$\tp(C_{\alpha^*}[I_j], C_{\beta^*}[I_j]) = t^{n_j}_{s_j}$.
	(The disjoint type to which Lemma~\ref{type_lemma}
	is applied here is the concatenation $t^{n_0}_{s_0} {^\frown}
	t^{n_1}_{s_1} {^\frown} \ldots {^\frown} t^{n_k}_{s_k}$.)

	It follows that, when considering $\rho \restriction (S \cap \beta^*)$ in the
	construction of $G$, the answer to the question about $k$ was ``yes",
	since $\alpha^*$ is a witness. There is therefore an $\alpha \in S \cap
	\beta^*$ such that $\rho(\alpha) = k$ and $\{\rho \restriction (S \cap \alpha),
	\rho \restriction (S \cap \beta^*)\} \in E$. But, by our definition of $\rho$,
	we have
	\[
		c(\rho \restriction (S \cap \alpha)) = \rho(\alpha) = k = \rho(\beta^*)
		= c(\rho \restriction (S \cap \beta^*)),
	\]
	contradicting the assumption that $c$ is a proper coloring of $G$. Thus,
	$\chi(G) = \aleph_1$, so we have completed the proof.
\end{proof}

\section{Diamond and Hajnal-M\'{a}t\'{e} graphs} \label{hajnal_mate_section}

We begin this section by recalling the combinatorial principle $\diamondsuit$.

\begin{definition}
	$\diamondsuit$ is the assertion that there is a sequence $\langle A_\alpha
	\mid \alpha < \omega_1 \rangle$ such that
	\begin{enumerate}
		\item for every $\alpha < \omega_1$, $A_\alpha \subseteq \alpha$;
		\item for every $A \subseteq \omega_1$, there are stationarily many
		$\alpha < \omega_1$ for which $A \cap \alpha = A_\alpha$.
	\end{enumerate}
\end{definition}

$\diamondsuit$ is easily seen to be a strengthening of the Continuum Hypothesis,
and it holds in many canonical inner models of set theory, such as G\"{o}del's
constructible universe $\mathrm{L}$.

Recalling the previous section, notice that, if we had a club-guessing sequence on $S^{\omega_1}_\omega$,
then we could modify the proof of Theorem A to obtain graphs of
cardinality $2^{\aleph_0}$ witnessing its
conclusion (the vertex set of the graphs would be $\bigcup_{\alpha
\in S^{\omega_1}_\omega} \Delta_\alpha$, where $\Delta_\alpha$ is the set of all
functions from $S^\alpha_\omega$ to $\bb{N}$). Since $\diamondsuit$ implies both
the existence of such a club-guessing sequence and the Continuum Hypothesis,
$\diamondsuit$ implies the existence of such graphs of cardinality $\aleph_1$.
We can do slightly better than this, though, and ensure that these graphs
have a particular structure.

\begin{definition}
	A graph $G = (\omega_1, E)$ is a \emph{Hajnal-M\'{a}t\'{e} graph} if, for
	all $\beta < \omega_1$, the set $N_G^{<}(\beta) := \{\alpha < \beta \mid
	\{\alpha, \beta\} \in E\}$ is either finite or a set of order type $\omega$
	converging to $\beta$.
\end{definition}

In \cite{hajnal_mate}, Hajnal and M\'{a}t\'{e} prove that $\diamondsuit^+$,
which is a strengthening of $\diamondsuit$ that also holds in $\mathrm{L}$, implies
the existence of Hajnal-M\'{a}t\'{e} graphs with uncountable chromatic number.
On the other hand, they prove in the same paper that Martin's Axiom implies
that every Hajnal-M\'{a}t\'{e} graph has countable chromatic number. Komjath,
in \cite{komjath_note_on_hajnal_mate} proves that $\diamondsuit$ is sufficient to obtain uncountably chromatic
Hajnal-M\'{a}t\'{e} graphs that are, moreover, triangle-free.
In \cite{komjath_shelah_forcing_constructions}, Komjath and Shelah improve this
result and show that, for every natural number $k$, there is an uncountably
chromatic Hajnal-M\'{a}t\'{e} graph with no odd cycles of length $2k+1$ or
shorter.

We now show how to use $\diamondsuit$ to adjust the proof of Theorem A to
obtain Hajnal-M\'{a}t\'{e} graphs. For convenience, we restate Theorem B here.

\begin{THMB}
	Suppose that $\diamondsuit$ holds. Then, for every function
	$f:\bb{N} \rightarrow \bb{N}$, there is a Hajnal-M\'{a}t\'{e} graph
	$G$ such that $|G| = \chi(G) = \aleph_1$ and, for every natural
	number $k \geq 3$, $f_G(k) \geq f(k)$.
\end{THMB}

\begin{proof}
	Fix a function $f:\bb{N} \rightarrow \bb{N}$. We will construct a Hajnal-M\'{a}t\'{e}
	graph $G$ such that $\chi(G) = \aleph_1$ and, for every $k \in \bb{N}$, if
	$H$ is a subgraph of $G$ with at most $f(k)$ vertices, then $\chi(H) \leq
	2^{k+1}$. As in the proof of Theorem A, this will suffice to prove the theorem.
	For notational convenience, the vertex set of our graph will actually be
	$S:= S^{\omega_1}_\omega$ rather than $\omega_1$. Our graph can easily be transferred
	to a Hajnal-M\'{a}t\'{e} graph on $\omega_1$ by, for instance, using the
	unique order preserving map from $S^{\omega_1}_\omega$ to $\omega_1$.

	For $k \in \mathbb{N}$, define the natural numbers $s_k$ and $n_k$ and the interval
	$I_k$ exactly as in the proof of Theorem A. By a straightforward coding argument,
	$\diamondsuit$ is easily seen to be equivalent to the existence of a sequence
	$\langle (C_\alpha, f_\alpha) \mid \alpha \in S \rangle$ such that
	\begin{itemize}
		\item for all $\alpha \in S$, $C_\alpha$ is a cofinal subset of $\alpha$
		of order type $\omega$ and $f_\alpha:S \cap \alpha \rightarrow \bb{N}$;
		\item for every club $D \subseteq \omega_1$ and every function $f: S
		\rightarrow \bb{N}$, there are stationarily many $\alpha \in S$ such that
		\begin{itemize}
			\item $C_\alpha \subseteq D$;
			\item $f \restriction (S \cap \alpha) = f_\alpha$.
		\end{itemize}
	\end{itemize}
	Fix such a sequence. As in the proof of Theorem A, we will define our
	graph $G = (S, E)$ together with an auxiliary function $h:E \rightarrow \bb{N}$.
	These will satisfy the following requirements.
	\begin{enumerate}
		\item For all $\alpha < \beta$ in $S$ and all $k \in \bb{N}$, if $\{\alpha,
		\beta\} \in E$ and $h(\{\alpha, \beta\}) = k$, then
		\begin{enumerate}
			\item $f_\beta(\alpha) = k$;
			\item for all $j \leq k$, $C_\alpha[I_j]$ and $C_\beta[I_j]$
			are disjoint and $\tp(C_\alpha[I_j], C_\beta[I_j]) = t^{n_j}_{s_j}$.
		\end{enumerate}
		\item For all $\beta \in S$ and all $k \in \bb{N}$, there is at most one
		$\alpha < \beta$ such that $\{\alpha, \beta\} \in E$ and $h(\{\alpha, \beta\})
		= k$.
	\end{enumerate}
	To define $G$, it suffices to specify $N^{<}_G(\beta) := \{\alpha < \beta \mid
	\{\alpha, \beta\} \in E\}$ for each $\beta \in S$. The value of $h(\{\alpha, \beta\})$
	for $\alpha \in N^{<}_G(\beta)$ will then be forced to be $f_\beta(\alpha)$.
	To this end, fix $\beta \in S$. For each $k \in \mathbb{N}$, ask the following question:
	is there $\alpha \in S \cap \beta$ such that $f_\beta(\alpha) = k$ and, for all
	$j \leq k$, $C_\alpha[I_j]$ and $C_\beta[I_j]$ are disjoint and $\tp(C_\alpha[I_j],
	C_\beta[I_j]) = t^{n_j}_{s_j}$? If the answer is ``yes", then let $\alpha^\beta_k$
	be the least such $\alpha$ and place $\{\alpha^\beta_k, \beta\}$ in $E$. If the
	answer is ``no", then there will be no $\alpha < \beta$ such that $\{\alpha,
	\beta\} \in E$ and $h(\{\alpha, \beta\}) = k$. This completes the construction
	of $G$.

	Notice that, for all $\beta \in S$ and all natural numbers $k \geq 1$, if $\alpha^\beta_k$
	is defined, then $\alpha^\beta_k > C_\beta(\max[I_{k-1}])$. It follows that,
	if $N^{<}_G(\beta)$ is infinite, then it is a set of order type $\omega$
	converging to $\beta$. Therefore, $G$ is indeed a Hajnal-M\'{a}t\'{e} graph.
	The verification that the finite subgraphs of $G$ behave as desired is
	exactly as in the proof of Theorem A, so we omit it.

	It remains to show that $\chi(G) = \aleph_1$. Suppose for sake of contradiction
	that $c:S \rightarrow \bb{N}$ is a proper coloring of $G$. Let
	$S^* = \{\alpha \in S \mid c \restriction (S \cap \alpha) = f_\alpha\}$. By
	the properties of our $\diamondsuit$ sequence, $S^*$ is stationary and, moreover, $\langle C_\alpha \mid \alpha
	\in S^* \rangle$ is a club-guessing sequence. By Proposition
	\ref{club_guessing_decomposition_prop}, we can fix $k \in \bb{N}$ such that,
	letting $S^*_k := \{\alpha \in S^* \mid c(\alpha) = k\}$,
	$\langle C_\alpha \mid \alpha \in S^*_k \rangle$ is a club-guessing sequence.

	By Lemma \ref{type_lemma}, we can find $\alpha^* < \beta^*$ in $S^*_k$ such
	that, for all $j \leq k$, $C_{\alpha^*}[I_j]$ and $C_{\beta^*}[I_j]$ are
	disjoint and $\tp(C_{\alpha^*}[I_j], C_{\beta^*}[I_j]) = t^{n_j}_{s_j}$.
	It follows that, when considering $\beta^*$ in the construction of $G$,
	the answer to the question about $k$ was ``yes", since $\alpha^*$ is a witness.
	There is therefore an $\alpha \in S \cap \beta^*$ such that $f_{\beta^*}(\alpha)
	= k$ and $\{\alpha, \beta^*\} \in E$. But then we have
	\[
		c(\alpha) = f_{\beta^*}(\alpha) = k = c(\beta^*),
	\]
	contradicting the assumption that $c$ is a proper coloring of $G$. Thus,
	$\chi(G) = \aleph_1$, and we have completed the proof.
\end{proof}

\section{Questions}

We end with a couple of questions that remain open. First, as mentioned above,
it is unknown whether the existence of graphs of size $\aleph_1$ such that
$f_G$ grows arbitrarily quickly follows simply from $\mathsf{ZFC}$.

\begin{question}
	Is it true in $\mathsf{ZFC}$ that, for every function $f:\bb{N} \rightarrow
	\bb{N}$, there is a graph $G$ such that $|G| = \chi(G) = \aleph_1$ and, for all
	sufficiently large $k \in \bb{N}$, $f_G(k) \geq f(k)$?
\end{question}

Next, our method only produces graphs of chromatic number precisely $\aleph_1$.
It is unclear whether we can get such graphs of arbitrarily large chromatic number.

\begin{question}
	Is it true that, for every function $f:\bb{N} \rightarrow \bb{N}$ and every
	cardinal $\kappa$, there is a graph $G$ such that $\chi(G) \geq \kappa$ and,
	for all sufficiently large $k \in \bb{N}$, $f_G(k) \geq f(k)$?
\end{question}

\bibliographystyle{plain}
\bibliography{bib}

\end{document}